%%%%%%%%%%%%%%%%%%%%%%%%%%%%%%%%%%%%%%%%%%%%%%%%%%%%%%%%%%%%%%%%%%%%%%%%%%%%%%%%%%%%%%%%%%%%%%%%%%%%%
%                                                                                                   %
%                 Naturally reductive homogeneous  $(\alpha,\beta)$-metric spaces                   %
%                                                                                                   %
%                       Mojtaba Parhizkar and Hamid Reza Salimi Moghaddam                           %
%%%%%%%%%%%%%%%%%%%%%%%%%%%%%%%%%%%%%%%%%%%%%%%%%%%%%%%%%%%%%%%%%%%%%%%%%%%%%%%%%%%%%%%%%%%%%%%%%%%%%

\documentclass[11pt]{amsart}
\usepackage[colorlinks]{hyperref}
\oddsidemargin=.7in \evensidemargin=.7in

\textwidth=6.2in

\addtolength{\oddsidemargin}{-.70in}
\setlength{\evensidemargin}{\oddsidemargin}

%\addtolength{\topmargin}{-0.5in}

\newtheorem{theorem}{Theorem}[section]

\newtheorem{cor}[theorem]{Corollary}

\theoremstyle{definition}
\newtheorem{definition}[theorem]{Definition}

\theoremstyle{remark}
\newtheorem{remark}[theorem]{Remark}

\numberwithin{equation}{section}

\begin{document}

\newcommand{\spacing}[1]{\renewcommand{\baselinestretch}{#1}\large\normalsize}
\spacing{1.14}

\title{Naturally reductive homogeneous  $(\alpha,\beta)$-metric spaces}

\author{M. Parhizkar}

\address{Department of Mathematics\\ University of Mohaghegh Ardabili\\ Ardabil\\ p.o.box. 56199-
11367-Iran.} \email{m\_parhizkar66@yahoo.com and m.parhizkar@uma.ac.ir}

\author {H. R. Salimi Moghaddam}

\address{Department of Mathematics\\ Faculty of  Sciences\\ University of Isfahan\\ Isfahan\\ 81746-73441-Iran.} \email{hr.salimi@sci.ui.ac.ir and salimi.moghaddam@gmail.com}

\keywords{Naturally reductive homogeneous space, invariant Riemannian metric, invariant $(\alpha,\beta)$-metric \\
AMS 2010 Mathematics Subject Classification: 53C60, 53C30.}

%%\date{\today}

\begin{abstract}
In the present paper we study naturally reductive homogeneous $(\alpha,\beta)$-metric spaces. We show that for homogeneous $(\alpha,\beta)$-metric spaces, under a mild condition, the two definitions of naturally reductive homogeneous Finsler space, given in the literature, are equivalent. Then, we compute the flag curvature of naturally reductive homogeneous $(\alpha,\beta)$-metric spaces.
\end{abstract}

\maketitle

%%---------------------------INTRODUCTION--------------------------

\section{\textbf{Introduction}}
The study of $(\alpha,\beta)-$metrics, which are introduced by M.
Matsumoto (see \cite{Ma2}.), is one of interesting and important fields in Finsler geometry. These metrics are considered not only by Finsler geometers because of their simple and interesting structure, but also by physicists because of their applications in physics. In fact the first type of $(\alpha,\beta)-$metrics, Randers metric, was introduced by G. Randers in 1941 for its application in general relativity (see \cite{Ra}). In recent years these metrics have found more applications, for example they are used by G. S. Asanov for formulation pseudo-Finsleroid gravitational field equations (see \cite{Asan1} and \cite{Asan2}.).\\
Some important examples
of $(\alpha,\beta)-$metrics are Randers metric $\alpha+\beta$,
Kropina metric $\frac{\alpha^2}{\beta}$, and Matsumoto metric
$\frac{\alpha^2}{\alpha-\beta}$, where
$\alpha(x,y)=\sqrt{\tilde{a}_{ij}(x)y^iy^j}$ and $\beta(x,y)=b_i(x)y^i$
and $\tilde{a}$ and $\beta$ are a Riemannian metric and  a 1-form respectively
as follows:
\begin{eqnarray}
  \tilde{a}&=&\tilde{a}_{ij}dx^i\otimes dx^j \\
  \beta&=&b_idx^i.
\end{eqnarray}

In this article we study naturally reductive homogeneous  $(\alpha,\beta)$-metric spaces. There are two different definitions for naturally reductive homogeneous Finsler spaces in the literature. The first one was given by S. Deng and Z. Hou in \cite{DeHo2} and the second one was given by D. Latifi in \cite{Latifi1} (also see \cite{Latifi2}). In \cite{DeHo3}, Deng and Hou showed that if a homogeneous Finsler space is naturally reductive in the sense of Latifi then it must be Berwaldian and is naturally reductive in the sense of Deng and Hou. The authors of \cite{DeHo3} pointed out that it is not clear whether there is a naturally reductive Finsler space in the sense of Deng and Hou which is not a naturally reductive Finsler space in the sense of Latifi. \\

In this paper we show that for homogeneous  $(\alpha,\beta)$-metric spaces, under a mild condition, these two definitions are equivalent.
Then we compute the flag curvature of naturally reductive homogeneous $(\alpha,\beta)$-metric spaces in the sense of Deng and Hou.\\
Now we give some preliminaries of Finsler geometry.\\
Let $M$ be a smooth $n-$dimensional manifold and $TM$ be its
tangent bundle. A Finsler metric on $M$ is a non-negative function
$F:TM\longrightarrow \Bbb{R}$ which has the following properties:
\begin{enumerate}
    \item $F$ is smooth on the slit tangent bundle
    $TM^0:=TM\setminus\{0\}$,
    \item $F(x,\lambda y)=\lambda F(x,y)$ for any $x\in M$, $y\in T_xM$ and $\lambda
    >0$,
    \item the $n\times n$ Hessian matrix $[g_{ij}(x,y)]=[\frac{1}{2}\frac{\partial^2 F^2}{\partial y^i\partial
    y^j}]$ is positive definite at every point $(x,y)\in TM^0$.
\end{enumerate}
The concept of flag curvature is an important quantity which is associated with a Finsler
manifold. This quantity is a generalization of the concept of sectional
curvature. Flag curvature is defined as follows

\begin{eqnarray}\label{flag}
% \nonumber to remove numbering (before each equation)
  K(P,y)=\frac{g_y(R(u,y)y,u)}{g_y(y,y).g_y(u,u)-g_y^2(y,u)},
\end{eqnarray}
where $g_y(u,v)=\frac{1}{2}\frac{\partial^2}{\partial s\partial
t}(F^2(y+su+tv))|_{s=t=0}$, $P=span\{u,y\}$,
$R(u,y)y=\nabla_u\nabla_yy-\nabla_y\nabla_uy-\nabla_{[u,y]}y$ and
$\nabla$ is the Chern connection induced by $F$ (see \cite{BaChSh}
and \cite{Sh1}).\\

\begin{definition}
A Finsler space $(M, F)$ is called a Berwald space if the Chern connection
coefficients $\Gamma^i_{jk}$ in natural coordinates have no $y$ dependence.
\end{definition}
\begin{definition}
Suppose that $\alpha$ is a Riemannian metric and $\beta$ is a $1-$form as above. Let
\begin{equation}\label{alpha-norm}
    \|\beta(x)\|_\alpha:=\sqrt{\tilde{a}^{ij}(x)b_i(x)b_j(x)}.
\end{equation}
Now, let the function $F$ be defined as follows
\begin{equation}\label{alpha-beta metric}
    F:=\alpha\phi(s) \ \ \ , \ \ \ s=\frac{\beta}{\alpha},
\end{equation}
where $\phi=\phi(s)$ is a positive $C^\infty$ function on $(-b_0,b_0)$ satisfying
\begin{equation}\label{alpha-beta condition}
    \phi(s)-s\phi'(s)+(b^2-s^2)\phi''(s)>0 \ \ \ , \ \ \ |s|\leq b <b_0.
\end{equation}
Then by Lemma 1.1.2 of \cite{ChSh}, $F$ is a Finsler metric if $\|\beta(x)\|_\alpha<b_0$ for any $x\in M$.
A Finsler metric in the form (\ref{alpha-beta metric}) is called an $(\alpha,\beta)-$metric.
\end{definition}
For example if we consider $\phi(s)=1+s$, $\phi(s)=\frac{1}{1-s}$ and $\phi(s)=\frac{1}{s}$ then we have Randers, Matsumoto and Kropina metrics respectively.\\

The Riemannian metric $\tilde{a}$ induces an inner
product on any cotangent space $T^\ast_xM$ such that
$<dx^i(x),dx^j(x)>=\tilde{a}^{ij}(x)$. The induced inner product on
$T^\ast_xM$ induces a linear isometry between $T^\ast_xM$ and
$T_xM$. Then the 1-form $\beta$ corresponds to a vector field
$X$ on $M$ such that
\begin{eqnarray}
  \tilde{a}(y,X(x))=\beta(x,y).
\end{eqnarray}
Also we have $\|\beta(x)\|_{\alpha}=\|X(x)\|_{\alpha}$
(for more details see \cite{DeHo1} and \cite{Sa}.).
Therefore we can write $(\alpha,\beta)-$metrics
as follows:
\begin{eqnarray}\label{invariant alpha-beta metric}
  F(x,y)=\alpha(x,y)\phi(\frac{\tilde{a}(X(x),y)}{\alpha(x,y)}),
\end{eqnarray}
where for any $x\in M$,
$\sqrt{\tilde{a}(X(x),X(x))}=\|X(x)\|_{\alpha}<b_0$.\\
%%------------------Fundamental properties of naturally reductive homogeneous  $(\alpha,\beta)$-metric spaces-----------------------------------
\section{\textbf{Naturally reductive homogeneous  $(\alpha,\beta)$-metric spaces}}
In order to study the geometric properties of $(\alpha,\beta)$-metrics, we need a formula for the spray coefficients of an $(\alpha,\beta)$-metric. Let $\nabla\beta=b_{i\vert j}y^{i}dx^{j}$ denote the covariant derivative of $\beta$ with respect to $\alpha$. Let $G^{i}$ and $\tilde{G}^{i}$ denote the spray coefficients of $F$ and $\alpha$, respectively, given by
\begin{eqnarray}
G^{i}=\frac{g^{il}}{4}\big{\{} [F^{2}]_{x^{k}y^{l}}y^{k} - [F^{2}]_{x^{l}}\big{\}},\qquad \tilde{G}^{i}=\frac{\tilde{a}^{il}}{4}\big{\{} [\alpha^{2}]_{x^{k}y^{l}}y^{k} - [\alpha^{2}]_{x^{l}}\big{\}},
\end{eqnarray}
where $(g^{ij})=(g_{ij})^{-1}$ and $(\tilde{a}^{ij})=(\tilde{a}_{ij})^{-1}$. Let
\begin{eqnarray}
r_{ij}:=\frac{1}{2}(b_{i\vert j}+b_{j\vert i}), \qquad s_{ij}:=\frac{1}{2}(b_{i\vert j}- b_{j\vert i}),\qquad \nonumber\\
 s^{i}_{j}:=\tilde{a}^{ik}s_{kj}, \quad s_{j}:=b_{i}s^{i}_{j}=b^{k}s_{kj}, \quad b_{ij}:=r_{ij} + b_{i}s_{j} + b_{j}s_{i},\nonumber
\end{eqnarray}
and
\begin{eqnarray}
r_{00}:=r_{ij}y^{i}y^{j}, \qquad s_{0}:= s_{i}y^{i}, \qquad s^{i}_{0}:= s^{i}_{j}y^{j}.\nonumber
\end{eqnarray}
By a direct computation, one gets the following formula:
\begin{eqnarray}
G^{i}=\tilde{G}^{i} + \alpha Qs^{i}_{0} + \Theta\Big{\{}-2Q\alpha s_{0} + r_{00} \Big{\}}\frac{y^{i}}{\alpha} + \Psi\Big{\{}-2Q\alpha s_{0} + r_{00} \Big{\}}b^{i},
\end{eqnarray}
where
\begin{eqnarray}
Q &:=&  \frac{\phi}{\phi - s\phi^{'}},\nonumber\\
\Theta &:=& \frac{(\phi - s \phi^{'})\phi^{'}}{2\Big{(}(\phi - s \phi^{'}) + (b^{2} - s^{2})\phi^{''}\Big{)}\phi} - s\Psi,\nonumber\\
\Psi &:=& \frac{\phi^{''}}{2\Big{(}(\phi - s \phi^{'}) + (b^{2} - s^{2})\phi^{''}\Big{)}},\nonumber
\end{eqnarray}
where $ s:= \frac{\beta}{\alpha} $ and $b:=\Vert\beta_{x}\Vert_{\alpha}$ (see \cite{ChSh}).\\
Thus the above, after some manipulation, becomes
\begin{eqnarray}\label{Gi}
G^{i}:=&& \dfrac{1}{2}\Gamma^{i}_{jk}y^{j}y^{k}\nonumber\\
 = &&\dfrac{1}{2}\tilde{\Gamma}^{i}_{jk}y^{j}y^{k}\nonumber\\
    && + \frac{1}{2}\alpha Qb_{j\vert k}\Big{(}\tilde{a}^{ij}y^{k} - \tilde{a}^{ik}y^{j}\Big{)}\nonumber\\
      &&+\Theta b_{j\vert k} \Big{\{}-Q\alpha(y^{j}b^{k}-y^{k}b^{j}) + y^{j}y^{k}\Big{\}}\frac{y^{i}}{\alpha}\\	           && + \Psi b_{j\vert k}\Big{\{}-Q\alpha(y^{k}b^{j}-y^{j}b^{k}) + y^{j}y^{k}\Big{\}}b^{i},\nonumber
\end{eqnarray}
where $\Gamma^{i}_{jk}$ are the Christoffel symbols of the Chern connection of $F$ and $\tilde{\Gamma}^{i}_{jk}$ are the Christoffel symbols of the Levi-Civita connection of $\alpha$.\\
Let $F = \alpha\phi(\frac{\beta}{\alpha})$ be an $(\alpha,\beta)$-metric. If $\beta$ is parallel with respect to $\alpha$ ($ b_{j\vert k} = 0 $), then by (\ref{Gi}), $\Gamma^{i}_{jk}=\tilde{\Gamma}^{i}_{jk}$. Thus $F$ is of Berwald type. The converse is also true(see \cite{Ma3} and \cite{kiku}).
\begin{definition}(see \cite{KoNu})
A homogeneous manifold $M=\frac{G}{H}$ with an invariant Riemannian metric $\tilde{a}$ is said to be naturally reductive if it admits an $Ad(H)$-invariant decomposition $\frak{g}=\frak{h}\oplus\frak{m}$ satisfying the condition
\begin{equation}\label{1}
\langle [x,y]_{\frak{m}} , z\rangle + \langle y, [x,z]_{\frak{m}}\rangle =0, \ \ \ x,y,z\in\frak{m},
\end{equation}
where $ \langle \, , \, \rangle $ is the bilinear form on $\frak{m}$ induced by $\tilde{a}$ and $[ , ]_\frak{m}$ is the projection to $\frak{m}$ with respect to the decomposition $\frak{g}=\frak{h}\oplus\frak{m}$.
\end{definition}
In particular case if we consider $H=\{e\}$ then $\frak{m}=\frak{g}$ which shows that the condition (\ref{1}) reduces to the condition
\begin{equation}\label{2}
\langle [x,y] , z\rangle + \langle y, [x,z]\rangle =0,
\end{equation}
for a bi-invariant Riemannian metric on $G$.

In literature, there are two versions of the definition of naturally reductive Finsler metrics on a manifold. The first version has been introduced by the S. Deng and Z. Hou in \cite{DeHo2} and the second one has been introduced by D. Latifi in \cite{Latifi1}.
\begin{definition}(\cite{DeHo2}, Deng and Hou)\label{def-2}
A homogeneous manifold $\frac{G}{H}$ with an invariant Finsler metric $F$ is called naturally reductive if there
exists an invariant Riemannian metric $\tilde{a}$ on $\frac{G}{H}$ such that $(\frac{G}{H}, \tilde{a})$ is naturally reductive and the connections of $\tilde{a}$ and $F$ coincide.
\end{definition}
In this definition, they assume that such a metric must be of Berwald type.
\begin{definition}(\cite{Latifi1}, Latifi)\label{def-1}
A homogeneous manifold $\frac{G}{H}$ with an invariant Finsler metric $F$ is called naturally reductive if there
exists an $Ad(H)$-invariant decomposition $\frak{g} = \frak{h} \oplus \frak{m}$ such that
\begin{equation}\label{3}
g_{y}([x,u]_{\frak{m}},v) + g_{y} (u,[x,v]_{\frak{m}}) +2C_{y}([x,y]_{\frak{m}},u,v) = 0,
\end{equation}
where $y\neq0, x, u, v \in \frak{m}$.
\end{definition}

In \cite{DeHo3}, Deng and Hou proved the following theorem.
\begin{theorem}\label{th-1}
If a homogeneous Finsler space $(\frac{G}{H}, F)$ is naturally reductive in the sense of definition \ref{def-1}, then it must be naturally reductive in the sense of definition \ref{def-2}.
\end{theorem}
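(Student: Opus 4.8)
The plan is to show that Latifi's condition (\ref{3}) forces the invariant Finsler metric $F$ to be of Berwald type, and simultaneously to produce the auxiliary Riemannian metric $\tilde a$ required by Definition \ref{def-2}. The natural candidate for $\tilde a$ is the invariant Riemannian metric on $\frac{G}{H}$ whose value at the origin is $g_y$ for a fixed $y$; more precisely, I would consider the osculating Riemannian metric and test whether the splitting $\frak g=\frak h\oplus\frak m$ that makes $F$ naturally reductive in Latifi's sense also makes $(\frac{G}{H},\tilde a)$ naturally reductive in the sense of (\ref{1}).

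First I would extract geometric consequences of (\ref{3}). Writing the Cartan tensor $C_y$ and recalling that $C_y$ is symmetric and totally symmetric in its three slot arguments, I would specialize (\ref{3}) by choosing $x$, $u$, $v$ among a $g_y$-orthonormal frame and by letting $y$ range over $\frak m$. The term $2C_y([x,y]_{\frak m},u,v)$ is the obstruction to the bracket's being skew-symmetric with respect to $g_y$; isolating it and using the homogeneity relations $C_y(y,\cdot,\cdot)=0$ and $\frac{\partial}{\partial y}g_y=2C_y$ should let me read off that the Chern (or Berwald) connection coefficients are independent of the direction $y$, which is exactly the Berwald condition. The key identity to invoke here is that a homogeneous Finsler metric is Berwald precisely when its Chern connection coincides with the Levi-Civita connection of some invariant Riemannian metric, so establishing directional independence of the connection is the crux.

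Next, having shown $F$ is Berwald, I would invoke the structure theory of invariant Berwald metrics on homogeneous spaces: the underlying spray, and hence the connection, is the Levi-Civita connection of the invariant Riemannian metric $\tilde a$ induced by $\alpha$ (the Riemannian part of the $(\alpha,\beta)$-metric), so the connections of $\tilde a$ and $F$ coincide automatically. It then remains to verify that $(\frac{G}{H},\tilde a)$ is naturally reductive in the classical sense (\ref{1}) with respect to the same decomposition. This I would obtain by feeding the Berwald condition back into (\ref{3}): once the $C_y$-term is controlled, (\ref{3}) collapses to a skew-symmetry statement for $[x,\cdot]_{\frak m}$ relative to $g_y$, and integrating over directions (or evaluating at a distinguished $y$) transfers this skew-symmetry to the inner product $\langle\,,\,\rangle$ induced by $\tilde a$, which is precisely (\ref{1}).

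The main obstacle I expect is the passage from the direction-dependent pairing $g_y$ to the fixed inner product $\langle\,,\,\rangle$ of $\tilde a$: the Cartan tensor term genuinely couples the metric's dependence on $y$ to the Lie bracket, so one cannot simply drop it, and the argument must exploit the total symmetry of $C_y$ together with the $Ad(H)$-invariance of the whole configuration to average out or annihilate that term. Controlling this term rigorously — rather than merely formally — is where the real work lies, and it is presumably why the authors restrict attention to Berwald-type metrics in Definition \ref{def-2}. I would therefore spend most of the effort on the step that converts (\ref{3}) into the Berwald property, after which the identification of $\tilde a$ and the verification of (\ref{1}) should follow comparatively routinely.
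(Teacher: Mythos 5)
First, a point about the comparison itself: this paper does not prove Theorem \ref{th-1} at all --- it is quoted from Deng and Hou \cite{DeHo3} --- so your attempt has to be measured against the argument of that reference. Your skeleton (show $F$ is Berwald, produce an invariant Riemannian metric with the same connection, then verify (\ref{1})) is indeed the skeleton of the known proof, but each step, as you describe it, has a genuine gap. The missing idea in your Berwald step is the specialization $u=v=y$ in (\ref{3}): since the Cartan tensor vanishes whenever one of its arguments equals $y$, this gives $g_y(y,[x,y]_{\frak{m}})=0$ for all $x\in\frak{m}$, i.e.\ every nonzero $y\in\frak{m}$ is a geodesic vector (Theorem 3.1 of \cite{Latifi1}), so all geodesics through the origin are the orbits $t\mapsto\exp(ty)\cdot o$. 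The Berwald property then follows from smoothness of the exponential map along the zero section (Akbar-Zadeh's criterion), and since the resulting affine connection is torsion-free and has the same parametrized geodesics as the natural torsion-free connection of the decomposition, the two connections coincide. Your proposed route --- reading direction-independence of the connection coefficients off (\ref{3}) by frame manipulations and the identities $C_y(y,\cdot,\cdot)=0$ and $\partial g_y/\partial y=2C_y$ --- is not a proof, and I do not see how to complete it without passing through the geodesic-vector argument.

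Two further steps fail as stated. (i) You take the auxiliary metric $\tilde{a}$ to be the one induced by $\alpha$, ``the Riemannian part of the $(\alpha,\beta)$-metric''; but Theorem \ref{th-1} concerns an \emph{arbitrary} homogeneous Finsler metric, so no $\alpha$ is given. Even within this paper's $(\alpha,\beta)$-setting, the coincidence of the Chern connection of $F$ with the Levi-Civita connection of $\tilde{a}$ is a \emph{hypothesis} of Theorems \ref{th-2} and \ref{th-3}, never a consequence of the Berwald property; for a Berwald $(\alpha,\beta)$-metric it amounts to $\beta$ being parallel, which is exactly what would have to be proved. What is needed instead, and what constitutes the substantive content of the cited proof, is Szab\'o's metrizability theorem for Berwald spaces together with the nontrivial refinement that in the homogeneous setting the Riemannian metric with the same connection can be chosen $G$-invariant. (ii) Your final transfer of skew-symmetry from $g_y$ to $\tilde{a}$ by ``integrating over directions'' does not work: averaging (\ref{3}) leaves the integrated Cartan term $\int 2C_y([x,y]_{\frak{m}},u,v)\,d\mu(y)$, which has no reason to vanish, and evaluating at a single distinguished $y$ fares no better. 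The correct conclusion is structural: once the Levi-Civita connection of the invariant metric $\tilde{a}$ equals the natural torsion-free connection of the decomposition, the $\tilde{a}$-geodesics through the origin are the orbits $\exp(ty)\cdot o$, and by the classical characterization in \cite{KoNu} this is equivalent to condition (\ref{1}); no averaging is involved.
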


\begin{cor}
Let $(\frac{G}{H},F)$ be a naturally reductive homogeneous Finsler space in the sense of definition \ref{def-1} then it must be of Berwald type.
\end{cor}

\begin{cor}
Let $(\frac{G}{H},F)$ be a homogeneous Finsler manifold, where $F$ is an invariant $(\alpha,\beta)-$metric defined by an invariant Riemannian metric $\tilde{a}$ and an invariant vector field ${X}$. If  $(\frac{G}{H}, F)$ is naturally reductive in the sense of \ref{def-1} then $(\frac{G}{H},\tilde{a})$ is naturally reductive.
\end{cor}
\begin{proof}
$(\frac{G}{H}, F)$ is naturally reductive in the sense of \ref{def-1}, therefore it is naturally reductive in the sense of \ref{def-2} and it is Berwaldian. Hence $(\frac{G}{H},\tilde{a})$ is naturally reductive.
\end{proof}

We show that, under a mild condition, two definitions of naturally reductive $(\alpha,\beta)-$metric space are equivalent.

\begin{theorem}\label{th-3}
Let $(\frac{G}{H},F)$ be a homogeneous Finsler manifold, where $F$ is an invariant $(\alpha,\beta)-$metric defined by an invariant Riemannian metric $\tilde{a}$ and an invariant vector field ${X}$ such that $\phi'(r)\neq0$, where $r:=\frac{\tilde{a}(X,y)}{\sqrt{\tilde{a}(y,y)}}=\beta(\frac{y}{\|y\|_\alpha})$. Then, the two definitions \ref{def-2} and \ref{def-1} are equivalent.
\end{theorem}

\begin{proof}
By attention to theorem \ref{th-1} it is sufficient to prove if $(\frac{G}{H},F)$ is a naturally reductive Finsler space in the sense of \ref{def-2} then it is naturally reductive in the sense of \ref{def-1}.
Suppose that $(\frac{G}{H},F)$ is a naturally reductive Finsler space in the sense of \ref{def-2}, hence it is of Berwald type and also $(\frac{G}{H}, \tilde{a})$ is naturally reductive.
We show that for all $0\neq y, z, u, v \in \frak{m}$
\begin{equation}
g_{y}([z,u]_{\frak{m}},v) + g_{y} (u,[z,v]_{\frak{m}}) +2C_{y}([z,y]_{\frak{m}},u,v) = 0.
\end{equation}
By using the formula $g_y(u,v)=\frac{1}{2}\frac{\partial^2}{\partial t \partial s}F^2(y+su+tv)|_{s=t=0}$ and some computations, for the $(\alpha,\beta)-$metric $F$ defined by relation (\ref{invariant alpha-beta metric}) we have:
\begin{eqnarray}\label{g_y}
% \nonumber to remove numbering (before each equation)
    g_y(u,v)&=& \tilde{a}(u,v)\phi^2(r)+\tilde{a}(y,u)\phi(r)\phi'(r)\Big{(}\frac{\tilde{a}(X,v)}{\sqrt{\tilde{a}(y,y)}}-\frac{\tilde{a}(X,y)\tilde{a}(y,v)}{(\tilde{a}(y,y))^{\frac{3}{2}}}\Big{)} \nonumber\\
            && + \Big{(}(\phi'(r))^2+\phi(r)\phi''(r)\Big{)}\Big{(}\frac{\tilde{a}(X,v)}{\sqrt{\tilde{a}(y,y)}}-\frac{\tilde{a}(X,y)\tilde{a}(y,v)}{(\tilde{a}(y,y))^\frac{3}{2}}\Big{)}\nonumber\\
            && \ \ \ \ \ \ \ \ \times\Big{(}\tilde{a}(X,u)\sqrt{\tilde{a}(y,y)}-\frac{\tilde{a}(y,u)\tilde{a}(X,y)}{\sqrt{\tilde{a}(y,y)}}\Big{)}\\
            && +\frac{\phi(r)\phi'(r)}{\sqrt{\tilde{a}(y,y)}}\Big{(}\tilde{a}(X,u)\tilde{a}(y,v)-\tilde{a}(u,v)\tilde{a}(X,y)\Big{)}\nonumber,
\end{eqnarray}
where $r=\frac{\tilde{a}(X,y)}{\sqrt{\tilde{a}(y,y)}}$. \\
So for any $y\neq0, z\in\frak{m}$ we have
\begin{eqnarray}\label{eq3}
    g_{y}(y,[y,z]_\frak{m})&=&\tilde{a}(y,[y,z]_\frak{m})
    \Big{(}\phi^2(r)-\phi(r)\phi'(r)r\Big{)}\nonumber\\
    &&+\tilde{a}(X,[y,z]_\frak{m})\Big{(}\phi'(r)F(y)\Big{)}.
\end{eqnarray}
$(\frac{G}{H}, \tilde{a})$ is naturally reductive, therefore the equation (\ref{1}) shows that
\begin{equation}\label{8}
\tilde{a}(y,[y,z]_{\frak{m}}) = 0 \ \ \ , \ \ \forall y\neq0, z\in\frak{m}.
\end{equation}
By attention to the definition \ref{def-2}, $(\frac{G}{H}, F)$ and $(\frac{G}{H}, \tilde{a})$ have the same connection, the same geodesics and so the same geodesic vectors. Therefore by using Theorem 3.1 of \cite{Latifi1} for all $y\neq0, z \in \frak{m}$ we have
\begin{equation}\label{9}
g_y(y,[y,z]_{\frak{m}}) = 0.
\end{equation}
On the other hand we have $\phi'(r)\neq0$ therefore  for all $0\neq y, z \in \frak{m}$ we have
\begin{equation}\label{10}
    \tilde{a}(X,[y,z]_{\frak{m}})=0.
\end{equation}
By using the relations (\ref{g_y}), (\ref{8}), (\ref{10}) and some computations we have
\begin{eqnarray}
g_{y}([z,u]_{\frak{m}},v) &=& \tilde{a}([z,u]_{\frak{m}},v)\Big{(}\phi^{2}(r) - \phi(r)\phi'(r)r\Big{)} \nonumber\\
                            &&+ \tilde{a}([z,u]_{\frak{m}},y)\Big{(}\frac{\tilde{a}(X,v)}{\sqrt{\tilde{a}(y,y)}} -
                            \frac{\tilde{a}(y,v)}{\tilde{a}(y,y)}r\Big{)}\Big{(}\phi(r)\phi'(r) - ((\phi'(r))^{2} + \phi(r)\phi''(r))r\Big{)},
\end{eqnarray}
and
\begin{eqnarray}
g_{y}([z,v]_{\frak{m}},u) &=& \tilde{a}([z,v]_{\frak{m}},u)\Big{(}\phi^{2}(r) - \phi(r)\phi'(r)r\Big{)} \nonumber
                            \\&&+ \tilde{a}([z,v]_{\frak{m}},y)\Big{(}\frac{\tilde{a}(X,u)}{\sqrt{\tilde{a}(y,y)}} -
                            \frac{\tilde{a}(y,u)}{\tilde{a}(y,y)}r\Big{)}\Big{(}\phi(r)\phi'(r) - ((\phi'(r))^{2} + \phi(r)\phi''(r))r\Big{)}.
\end{eqnarray}
Now by using the definition
\begin{equation*}
    C_{y}(z,u,v) = \frac{1}{4}\frac{\partial}{\partial s}\frac{\partial}{\partial t}\frac{\partial}{\partial h}[F^2(y+sz+tu+hv)]\Big{\vert}_{s=t=h=0},
\end{equation*}
for Cartan tensor we have,
\begin{eqnarray}\label{C-y}
2C_{y}(u,v,z) &=& \Big{(}\frac{3\phi'(r)\phi''(r)+\phi(r)\phi'''(r)}{\tilde{a}(y,y)}\Big{)}\Big{(}\tilde{a}(X,v) -
                        \frac{\tilde{a}(v,y)\tilde{a}(X,y)}{\tilde{a}(y,y)}\Big{)}\nonumber\\
                && \times \Big{(}\tilde{a}(X,u)\sqrt{\tilde{a}(y,y)} -
                        \frac{\tilde{a}(u,y)\tilde{a}(X,y)}{\sqrt{\tilde{a}(y,y)}}\Big{)}\Big{(}\tilde{a}(X,z) - \frac{\tilde{a}(z,y)\tilde{a}(X,y)}{\tilde{a}(y,y)}\Big{)}\nonumber\\
                && + \Big{(}\frac{(\phi'(r))^{2} + \phi(r)\phi''(r)}{\tilde{a}(y,y)} \Big{)}\Big{(}\tilde{a}(X,v) -
                        \frac{\tilde{a}(v,y)\tilde{a}(X,y)}{\tilde{a}(y,y)}\Big{)}\nonumber\\
                &&\times \Big{(}\tilde{a}(X,u)\tilde{a}(z,y) - \tilde{a}(u,z)\tilde{a}(X,y) - \tilde{a}(u,y)\tilde{a}(z,X)
                        +\frac{\tilde{a}(z,y)\tilde{a}(u,y)\tilde{a}(X,y)}{\tilde{a}(y,y)}\Big{)}\nonumber\\
                &&-\Big{(}\dfrac{(\phi'(r))^{2}+\phi(r)\phi''(r)}{\tilde{a}(y,y)\sqrt{\tilde{a}(y,y)}}\Big{)}
                        \Big{(}\tilde{a}(X,u)\sqrt{\tilde{a}(y,y)}-\frac{\tilde{a}(u,y)\tilde{a}(X,y)}{\sqrt{\tilde{a}(y,y)}}\Big{)}\nonumber\\
                && \times \Big{(}\tilde{a}(z,y)\tilde{a}(X,v) + \tilde{a}(v,z)\tilde{a}(X,y) + \tilde{a}(v,y)\tilde{a}(X,z) -
                        \frac{3\tilde{a}(v,y)\tilde{a}(X,y)\tilde{a}(z,y)}{\tilde{a}(y,y)}\Big{)}\\
                && + \Big{(}\frac{(\phi'(r))^{2} + \phi(r)\phi''(r)}{\tilde{a}(y,y)}\Big{)}\Big{(}\tilde{a}(X,z) -
                        \frac{\tilde{a}(z,y)\tilde{a}(X,y)}{\tilde{a}(y,y)}\Big{)}\nonumber\\
                && \times \Big{(} \tilde{a}(X,u)\tilde{a}(v,y) - \tilde{a}(u,v)a(X,y) + \tilde{a}(u,y)\tilde{a}(X,v) -
                        \frac{\tilde{a}(v,y)\tilde{a}(u,y)\tilde{a}(X,y)}{\tilde{a}(y,y)}\Big{)}\nonumber\\
                && + \frac{\phi(r)\phi'(r)}{\sqrt{\tilde{a}(y,y)}} \Big{(}\tilde{a}(X,u)\tilde{a}(v,z) + \tilde{a}(u,v)\tilde{a}(X,z)
                        +\tilde{a}(u,z)\tilde{a}(X,v)\nonumber \\
                && - \frac{\tilde{a}(z,y)\tilde{a}(v,y)\tilde{a}(X,u) + \tilde{a}(u,v)\tilde{a}(X,y)\tilde{a}(z,y) +
                        \tilde{a}(u,y)\tilde{a}(X,v)\tilde{a}(z,y)}{\tilde{a}(y,y)}\nonumber\\
                && - \frac{\tilde{a}(v,z)\tilde{a}(u,y)\tilde{a}(X,y) + \tilde{a}(v,y)\tilde{a}(u,z)\tilde{a}(X,y) +
                        \tilde{a}(v,y)\tilde{a}(u,y)\tilde{a}(X,z)}{\tilde{a}(y,y)}\nonumber\\
                && + \frac{3\tilde{a}(z,y)\tilde{a}(v,y)\tilde{a}(u,y)\tilde{a}(X,y)}{(\tilde{a}(y,y))^{2}} \Big{)}.\nonumber
\end{eqnarray}
So we have
\begin{eqnarray}
2C_{y}([z,y]_{\frak{m}},u,v) &=& \tilde{a}([z,y]_{\frak{m}},u)\Big{(}\frac{\tilde{a}(X,v)}{\sqrt{\tilde{a}(y,y)}}
                                    - \frac{\tilde{a}(v,y)}{\tilde{a}(y,y)}r\Big{)}\Big{(}\phi(r)\phi'(r) - \big{(}(\phi'(r))^{2} + \phi(r)\phi''(r)\big{)}r\Big{)}\nonumber \\
                                && + \tilde{a}([z,y]_{\frak{m}},v)\Big{(}\frac{\tilde{a}(X,u)}{\sqrt{\tilde{a}(y,y)}} -
                                    \frac{\tilde{a}(u,y)}{\tilde{a}(y,y)}r\Big{)}\Big{(}\phi(r)\phi'(r) - \big{(}(\phi'(r))^{2} +
                                    \phi(r)\phi''(r)\big{)}r\Big{)}.
\end{eqnarray}
Therefore
\begin{eqnarray}\label{eq14}
% \nonumber to remove numbering (before each equation)
  g_{y}([z,u]_{\frak{m}},v) + g_{y} (u,[z,v]_{\frak{m}}) + 2C_{y}([z,y]_{\frak{m}},u,v) &=& \nonumber\\
            &&\hspace*{-5cm}\big{(} \tilde{a}([z,u]_{\frak{m}},v) + \tilde{a}([z,v]_{\frak{m}},u)\big{)}\Big{(}(\phi(r))^{2} - \phi(r)\phi'(r)r\Big{)}\nonumber\\
            &&\hspace*{-5cm}+ \big{(}\tilde{a}([z,u]_{\frak{m}},y) + \tilde{a}([z,y]_{\frak{m}},u)\big{)}\Big{(}\dfrac{\tilde{a}(X,v)}{\sqrt{\tilde{a}(y,y)}}-
                    \dfrac{\tilde{a}(v,y)}{\tilde{a}(y,y)}r\Big{)}\nonumber\\
            &&\hspace*{-5cm}\times\Big{(}\phi(r)\phi'(r) - \big{(}(\phi'(r))^{2} + \phi(r)\phi''(r)\big{)}r\Big{)}\\
            &&\hspace*{-5cm}+ \big{(}\tilde{a}([z,v]_{\frak{m}},y) + \tilde{a}([z,y]_{\frak{m}},v)\big{)}\Big{(}\dfrac{\tilde{a}(X,u)}{\sqrt{\tilde{a}(y,y)}} -
                    \dfrac{\tilde{a}(u,y)}{\tilde{a}(y,y)}r\Big{)}\nonumber\\
            &&\hspace*{-5cm}\times\Big{(}\phi(r)\phi'(r) - \big{(}(\phi'(r))^{2} + \phi(r)\phi''(r)\big{)}r\Big{)}=0,\nonumber
\end{eqnarray}
for all $y \neq 0, u, v, z \in \frak{m}$.
\end{proof}

\begin{remark}
The condition $\phi'(r)\neq0$ which is assumed in the previous theorem is not a restrictive condition. For example the famous $(\alpha,\beta)$-metrics Randers, Kropina and Matsumoto are satisfy this condition because for any $y \neq 0\in \frak{m}$ we have:
\begin{description}
  \item[Randers] $\phi'(r)=1\neq0$
  \item[Kropina] $\phi'(r)=-\frac{\tilde{a}(y,y)}{\tilde{a}(X,y)}=-F(y)\neq0$
  \item[Matsumoto] $\phi'(r)=\frac{\tilde{a}(y,y)}{(\sqrt{\tilde{a}(y,y)}-\tilde{a}(X,y))^2}\neq0$.
\end{description}

\end{remark}

\begin{theorem}\label{th-5}
Suppose that  $(\frac{G}{H},F)$ is a homogeneous Finsler manifold, where $F$ is an invariant $(\alpha,\beta)-$metric defined by an invariant Riemannian metric $\tilde{a}$ and an invariant vector field ${X}$ such that $\phi'(r)\neq0$.
Then the homogeneous Finsler manifold $(\frac{G}{H},F)$ is naturally reductive in the sense of \ref{def-2} (or equivalently \ref{def-1}) if and only if $(ad_{x})_{\frak{m}}$, for every $x\in \frak{g}$, is skew-adjoint with respect to $\tilde{a}$  and $\tilde{a}(X, [\frak{m},\frak{m}]_{\frak{m}}) = 0$.
\end{theorem}
 \begin{proof}
   Suppose that, for every $x\in \frak{g}$, $(ad_{x})_{\frak{m}}$ is skew-adjoint and $\tilde{a}(X, [\frak{m},\frak{m}]_{\frak{m}}) = 0$. Therefore by attention to the proof of theorem \ref{th-3}, the equations (\ref{8}), (\ref{10}) and therefore (\ref{eq14}) hold.\\
 Conversely Let $(\frac{G}{H},F)$ be naturally reductive. By using theorem \ref{th-3}, for any $0\neq y, z\in\frak{m}$ we have
\begin{equation}\label{4}
    g_{y}(y,[y,z]_\frak{m})=0,
\end{equation}
and
\begin{equation}\label{5}
    \tilde{a}(y,[y,z]_{\frak{m}}) = 0.
\end{equation}
Equation (\ref{5}) shows that $(ad_{x})_{\frak{m}}$ is skew-adjoint, for every $x\in \frak{g}$.\\
Now equations (\ref{4}), (\ref{5}) and (\ref{eq3}) together with the condition $\phi'(r)\neq0$ show that
\begin{equation}\label{7}
    \tilde{a}(X, [\frak{m},\frak{m}]_{\frak{m}}) = 0.
\end{equation}
\end{proof}
\begin{cor}\label{Cor3}
Let $G$ be a connected Lie group with a left invariant Finsler metric $F$,  where $F$ is an  $(\alpha,\beta)-$metric defined by a left invariant Riemannian metric $\tilde{a}$ and a left invariant vector field $X$ such that $\phi'(r)\neq0$. Then the $(\alpha,\beta)-$ metric $F$ is bi-invariant if and only if $ad_{x}$ is skew-adjoint with respect to $\tilde{a}$ for every $ x\in \frak{g}$, and $\tilde{a}(X, [\frak{g}, \frak{g}]) = 0$.
\end{cor}
\begin{proof}
It is sufficient, in theorem \ref{th-5}, consider $H=\{e\}$ and use Theorem 2.3 of \cite{DeHo2}. \\
\end{proof}

Now we give the formula of the flag curvature of naturally reductive $(\alpha,\beta)-$metric spaces, in the sense of \ref{def-2}, explicitly.

\begin{theorem}\label{flag curvature1}
Let $(\frac{G}{H},F)$ be a naturally reductive homogeneous Finsler manifold in the sense of definition \ref{def-2}, where $F$ is an invariant $(\alpha,\beta)-$metric defined by an invariant Riemannian metric $\tilde{a}$ and an invariant vector field ${X}$. Let $\{P,y\}$ be a flag constructed in $(e,y)$, and $\{u,y\}$ be an orthonormal basis of $P$ with respect to the inner product induced by the Riemnnian metric $\tilde{a}$ on $\frak{m}$. Then the flag curvature of the flag $\{P,y\}$ in $\frak{m}$ is given by
\begin{eqnarray}\label{eq6}
% \nonumber to remove numbering (before each equation)
  K(P,y)&=&\frac{1}{\theta}\Big{\{}\Big{(}\phi^2(r)-\phi(r)\phi'(r)r\Big{)}\big{(}\frac{1}{4}\tilde{a}([y,[u,y]_{\frak{m}}]_{\frak{m}},u)
                    +\tilde{a}([y,[u,y]_{\frak{h}}],u)\big{)}\nonumber\\
        &&+\Big{(}(\phi')^2(r)+\phi(r)\phi''(r)\Big{)}\tilde{a}(X,u)\big{(}\frac{1}{4}\tilde{a}([y,[u,y]_{\frak{m}}]_{\frak{m}},X) + \tilde{a}(
                    [y,[u,y]_{\frak{h}}],X)\big{)}\Big{\}},
\end{eqnarray}
where $r=\frac{{\tilde{a}}(X,y)}{{\tilde{a}}(y,y)}={\tilde{a}}(X,y)$ and $\theta=\phi^2(r)\Big{(}\phi^2(r)+\phi(r)\phi''(r)\tilde{a}^2(X,u)-\phi(r)\phi'(r)r\Big{)}$.
\end{theorem}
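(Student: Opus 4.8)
The plan is to reduce the Finsler flag curvature to the Riemannian curvature of $\tilde{a}$ and then feed the naturally reductive curvature formula into the expansion of $g_y$ provided by (\ref{g_y}). Since $(\frac{G}{H},F)$ is naturally reductive in the sense of Definition \ref{def-2}, the metric $F$ is of Berwald type and its Chern connection coincides with the Levi-Civita connection of $\tilde{a}$; consequently the curvature operator $R(u,y)y=\nabla_u\nabla_y y-\nabla_y\nabla_u y-\nabla_{[u,y]}y$ entering (\ref{flag}) is exactly the Riemannian curvature tensor of $(\frac{G}{H},\tilde{a})$. This is the device that makes the computation tractable: everything curvature-related is governed by $\tilde{a}$, while the Finslerian character of $F$ enters only through the fundamental tensor $g_y$.

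First I would record the curvature of a naturally reductive Riemannian homogeneous space at the origin. Using that for such a space the Levi-Civita connection restricts to $\nabla_X Y=\frac{1}{2}[X,Y]_{\frak{m}}$ on $\frak{m}$, the standard curvature formula for naturally reductive spaces yields, for every $w\in\frak{m}$, the identity
\begin{equation*}
\tilde{a}(R(u,y)y,w)=\frac{1}{4}\tilde{a}([y,[u,y]_{\frak{m}}]_{\frak{m}},w)+\tilde{a}([y,[u,y]_{\frak{h}}],w).
\end{equation*}
I would then apply this with $w=u$, $w=y$ and $w=X$, since precisely these three pairings appear in (\ref{eq6}).

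Next I would evaluate both the numerator and the denominator of (\ref{flag}) by substituting into (\ref{g_y}), exploiting the orthonormality $\tilde{a}(y,y)=\tilde{a}(u,u)=1$, $\tilde{a}(y,u)=0$ (so that $r=\tilde{a}(X,y)$). For the denominator a short calculation gives $g_y(y,y)=\phi^2(r)$, $g_y(y,u)=\phi(r)\phi'(r)\tilde{a}(X,u)$ and $g_y(u,u)=\phi^2(r)+((\phi'(r))^2+\phi(r)\phi''(r))\tilde{a}^2(X,u)-\phi(r)\phi'(r)r$, whence $g_y(y,y)g_y(u,u)-g_y^2(y,u)=\theta$ after the cross term $\phi^2(r)(\phi'(r))^2\tilde{a}^2(X,u)$ cancels. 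For the numerator I would set $w=R(u,y)y\in\frak{m}$ and expand $g_y(w,u)$ through (\ref{g_y}); the orthonormality collapses it to a linear combination of $\tilde{a}(w,u)$, $\tilde{a}(y,w)$ and $\tilde{a}(X,w)$ with exactly the coefficients $\phi^2(r)-\phi(r)\phi'(r)r$, then $(\phi(r)\phi'(r)-((\phi'(r))^2+\phi(r)\phi''(r))r)\tilde{a}(X,u)$, and $((\phi'(r))^2+\phi(r)\phi''(r))\tilde{a}(X,u)$ displayed in (\ref{eq6}).

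Finally I would insert the curvature identity of the first step into these three pairings and divide by $\theta$, which produces (\ref{eq6}). The main obstacle is bookkeeping rather than conceptual: formula (\ref{g_y}) has four groups of terms, and one must verify that under orthonormality every contribution proportional to $\tilde{a}(y,u)$ drops out and that the bracketed factors of the form $\frac{\tilde{a}(X,\cdot)}{\sqrt{\tilde{a}(y,y)}}-\frac{\tilde{a}(X,y)\tilde{a}(y,\cdot)}{(\tilde{a}(y,y))^{3/2}}$ vanish when the free slot is filled by $y$, so that only the stated coefficients survive. A secondary point requiring care is fixing the sign convention in the naturally reductive curvature formula so that it is compatible with the convention for $R$ used in (\ref{flag}).
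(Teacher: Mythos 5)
Your computational core coincides with the paper's proof: both reduce the flag curvature of $F$ to the curvature of the naturally reductive Riemannian connection, expand $g_y$ via (\ref{g_y}) under $\tilde{a}$-orthonormality, and check that the three surviving coefficients and the cancellation $g_y(y,y)g_y(u,u)-g_y^2(y,u)=\theta$ give (\ref{eq6}). Those computations are correct as you describe them.

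However, there is a genuine logical slip at your first step. Definition \ref{def-2} asserts only the existence of \emph{some} invariant Riemannian metric --- the paper calls it $g$ in this proof --- which is naturally reductive and whose Levi-Civita connection coincides with the Chern connection of $F$; it does not say that this metric is the $\tilde{a}$ from which the $(\alpha,\beta)$-metric is built (the symbol clash in the statement of Definition \ref{def-2} is misleading, but the paper clearly distinguishes $g$ from $\tilde{a}$, and devotes Corollary \ref{flag curvature2} to the special case $\tilde{a}=g$). Your inference that the Chern connection equals the Levi-Civita connection of $\tilde{a}$, and your subsequent use of $\nabla_X Y=\tfrac{1}{2}[X,Y]_{\frak{m}}$ for $\tilde{a}$, are therefore unjustified in general: as written, your argument proves the theorem only when $g=\tilde{a}$, i.e.\ the corollary's setting (where, incidentally, the middle term of (\ref{eq6}) vanishes by skew-symmetry of the curvature tensor). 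The repair is exactly what the paper does: take the metric $g$ furnished by Definition \ref{def-2}, note that coinciding connections force coinciding curvature tensors, and invoke the naturally reductive curvature formula in its \emph{operator} form $R(u,y)y=\tfrac{1}{4}[y,[u,y]_{\frak{m}}]_{\frak{m}}+[y,[u,y]_{\frak{h}}]$, a vector-valued identity relative to $g$'s reductive decomposition; this vector can then be paired through $g_y$ (hence through $\tilde{a}$ and $X$ via (\ref{g_y})) without $\tilde{a}$ itself being naturally reductive. With that substitution the rest of your argument goes through verbatim.
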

\begin{proof}
Since the $(\alpha,\beta)$-metric $F$ is naturally reductive in the sense of definition \ref{def-2} therefore there exists an invariant Riemannian metric $g$ on $\frac{G}{H}$ such that $(\frac{G}{H}, g)$ is naturally reductive and the Levi-Civita connection of $g$ and the Chern connection of $F$ and therefore their curvature tensors coincide. So we have (see \cite{KoNu}.)
\begin{equation}\label{curvature tensor}
    R(u,y)y=\frac{1}{4}[y,[u,y]_{\frak{m}}]_{\frak{m}} + [y,[u,y]_{\frak{h}}] \quad,\quad \forall u, y \in \frak{m}.
\end{equation}
Now by using the relations (\ref{curvature tensor}) and (\ref{g_y}) we have:
\begin{eqnarray}\label{4g_y(R,u)}
% \nonumber to remove numbering (before each equation)
  g_y(R(u,y)y,u)&=&\Big{(}\phi^2(r)-\phi(r)\phi'(r)r\Big{)}\big{(}\frac{1}{4}\tilde{a}([y,[u,y]_{\frak{m}}]_{\frak{m}},u)
                            +\tilde{a}([y,[u,y]_{\frak{h}}],u)\big{)}\nonumber\\
                && +\Big{(}\phi(r)\phi'(r)\tilde{a}(X,u)-\big{(}(\phi')^2(r)+\phi(r)\phi''(r)\big{)}\tilde{a}(X,u)r\Big{)}\nonumber \\
                &&\times \big{(}\frac{1}{4}\tilde{a}([y,[u,y]_{\frak{m}}]_{\frak{m}},y) + \tilde{a}( [y,[u,y]_{\frak{h}}],y)\big{)}\\
                &&+\Big{(}(\phi')^2(r)+\phi(r)\phi''(r)\Big{)}\tilde{a}(X,u)\big{(}\frac{1}{4}\tilde{a}([y,[u,y]_{\frak{m}}]_{\frak{m}},X) + \tilde{a}( [y,[u,y]_{\frak{h}}],X)\big{)},\nonumber
\end{eqnarray}
\begin{equation}\label{g_y(u,u)}
    g_y(u,u)=\phi^2(r)+\Big{(}(\phi')^2(r)+\phi(r)\phi''(r)\Big{)}\tilde{a}^2(X,u)-\phi(r)\phi'(r)r,
\end{equation}
\begin{equation}\label{g_y(y,y)}
    g_y(y,y)=\phi^2(r),
\end{equation}
and
\begin{equation}\label{g_y(y,u)}
    g_y(y,u)=\phi(r)\phi'(r)\tilde{a}(X,u).
\end{equation}
We know that for any Riemannian manifold $(M,g)$ we have $g(R(X_1,X_2)X_3,X_4)=-g(R(X_1,X_2)X_4,X_3)$ so by using equation (\ref{curvature tensor}) we have
\begin{equation}
   \tilde{a}(R(u,y)y,y)=\frac{1}{4}\tilde{a}([y,[u,y]_{\frak{m}}]_{\frak{m}},y) + \tilde{a}( [y,[u,y]_{\frak{h}}],y)=0.
\end{equation}
Now it is sufficient to substitute the equations (\ref{4g_y(R,u)}), (\ref{g_y(u,u)}), (\ref{g_y(y,y)}) and (\ref{g_y(y,u)}) in (\ref{flag}).
\end{proof}
\begin{remark}
In theorem \ref{flag curvature1} if we let $(\frac{G}{H},F)$ be a naturally reductive homogeneous Finsler manifold in the sense of definition \ref{def-1} then the results are true by theorem \ref{th-1}.
\end{remark}

%%-------------------- BIBLIOGRAPHY------------------------
\bibliographystyle{amsplain}

\end{document}